\definecolor{airforceblue}{rgb}{0.36, 0.54, 0.66}
\definecolor{bleudefrance}{rgb}{0.19, 0.55, 0.91}
\definecolor{darkorchid}{rgb}{0.6, 0.2, 0.8}
\definecolor{darkorange}{rgb}{1.0, 0.55, 0.0}
\definecolor{darkspringgreen}{rgb}{0.09, 0.45, 0.27}
\newtheorem{thm}{Theorem}
\newtheorem{rmk}[thm]{Remark}
\theoremstyle{definition}
\newcommand{\PP}{{\mathbb{P}}}
\newcommand{\map}{\dasharrow}
\def\p{\mathbb P}
\def\I{\mathcal I}
\newcommand{\Sec}{\operatorname{Sec}}
\def\PP{{\mathbb P}}
\let\phi=\varphi
\newtheorem*{thmn0}{Theorem}
\begin{document}

\title[5-secant conics and the rationality of admissible cubic fourfolds]{Congruences of  5-secant conics and the rationality of some admissible cubic fourfolds}

\author[F. Russo]{Francesco Russo*}
\address{Dipartimento di Matematica e Informatica, Universit\` a degli Studi di Catania, Viale A. Doria 5, 95125 Catania, Italy}
\email{frusso@dmi.unict.it, \:giovannistagliano@gmail.com}
\thanks{*Partially  supported  by the PRIN {\it Geometria delle variet\`{a} algebriche} and by the FIR2014  {\it Aspetti geometrici e algebrici della Weak e Strong Lefschetz Property} of the University of Catania; the author is a member of the G.N.S.A.G.A. of INDAM}
\author[G. Staglian\` o]{Giovanni Staglian\` o}

\begin{abstract} The works of Hassett and Kuznetsov identify countably many divisors $C_d$ 
in the open subset of   $\p^{55}=\p(H^0(\mathcal O_{\p^5}(3)))$ parametrizing all cubic 4-folds and conjecture that 
the cubics corresponding to these divisors are precisely the rational 
ones. Rationality has been known classically for the first family $C_{14}$. We use 
congruences of 5-secant conics to prove rationality for the first three of the 
families $C_d$, corresponding to  $d=14, 26, 38$ in Hassett's notation.	
\end{abstract}

\maketitle

\section*{Introduction}
One of the most challenging open problems in classical and modern algebraic geometry is the rationality
of smooth cubic hypersurfaces $X\subset\p^5$. The very general cubic fourfold is expected to be irrational although
no such example is known.
Recent work of Hassett via Hodge Theory in   \cite{Hassett, Has00} (see also \cite{Levico}) and of Kuznetzsov via derived categories in  \cite{kuz4fold,kuz2}  lead  to the definition of infinitely many irreducible
divisors $\mathcal C_d$  in the moduli space $\mathcal C$ of cubic fourfolds corresponding to the  {\it admissible values} $d\in\mathbb N$ and to the conjecture that the cubics belonging to the admissible $\mathcal C_d$ should be precisely the rational ones ({\it Kuznetsov Conjecture}), see \cite{AT, kuz4fold, kuz2, Levico}. The admissible values are the even integers $d>6$  not divisible by 4, by 9 and nor by any odd prime of the form $2+3m$ (see for example \cite[Section 3]{Levico}), so that  the first admissible values are $d=14, 26, 38, 42$. Fano showed the
rationality of a general $[X]\in\mathcal C_{14}$, see \cite{Fano, BRS, Morin} and Section \ref{C14} for a different proof. Our  main result is the following:

\begin{thmn0}\label{intro-rat}
	Every cubic fourfold  in the irreducible divisors $\mathcal C_{26}$ and $\mathcal C_{38}$  is rational.
	\end{thmn0}

The Hodge theoretic definition of the
divisors $\mathcal C_d$ reduces to the existence of a rank two lattice $\langle h^2, S\rangle\subseteq H^{2,2}(X,\mathbb Z)$
of discriminant $d$, where $h$ is the class of a hyperplane section of $X$ and $S$ is the class of an algebraic
2-cycle on $X$. The elements of $\mathcal C_d$ are  called {\it special cubic fourfolds} and $\mathcal C_d\neq\emptyset$ if and only if $d>6$ and $d\equiv 0,2$ (mod 6), see \cite{Hassett, Has00}. 
The geometrical definition of $\mathcal C_d$
for small values of $d$ is by means of a particular surface $S_d\subset X$, which is
obviously not unique. For example, $\mathcal C_8$ is usually  described as the locus of smooth cubic
fourfolds containing a plane, while $\mathcal C_{14}$ can be described either as the closure of the locus
of cubic fourfolds containing a smooth quartic rational normal scroll  or as the closure
of the locus of those containing a smooth quintic del Pezzo surface, the so called Pfaffian locus. In 
\cite{Nuer} there are similar descriptions for 
the
values  
$12\leq d\leq 44$, $d\neq 42$, while for $d=42$ one can consult  \cite{Lai}.

Every $[X]\in\mathcal C_{14}$ has been proved to be rational by showing that it contains either  a smooth surface with one apparent double point or one of its
{\it small} degenerations, see \cite{BRS} and also Theorem \ref{famconics14} here for a different proof. The extension of  this geometrical approach to rationality for other (admissible) values $d$ appeared to be impossible  due to the paucity of surfaces with one apparent double point, which essentially can be used only for $d=14$.

 Our   discovery is the existence of irreducible
surfaces  $S_d\subset\p^5$ contained in a general element of $\mathcal C_d$, for $d$=14, 26 or 38, admitting a four-dimensional irreducible family of 5-secant conics such that
through a general point of $\p^5$ there passes a unique conic of the family. We dubbed these families  {\it congruences of
5-secant conics to $S_d$}. Once established the existence of such a congruence we  deduce that a general  cubic in $|H^0(\I_{S_d}(3))|$  is a rational section of the universal family of the congruence of 5-secant conics. In particular, such a general cubic is birational
to the parameter space of the congruence.
Then the rationality of the parameter space of the congruence (or the existence of a particular, sufficiently general, singular rational cubic hypersurface through $S_d$) shows that every smooth  cubic in $|H^0(\I_{S_d}(3))|$ is rational (and hence that a general $X\in\mathcal C_d$ is rational); see Theorem \ref{criterion} for a precise and general formulation of this principle. 

For $d=26$, which was the first open case, we take $S_{26}\subset\p^5$ to be the surface with one node  obtained as the projection of a smooth del Pezzo surface $S\subset\p^7$ of degree seven from a line intersecting the secant variety of $S$ transversally.  For $d=14$, we use  smooth projections of  rational octic  surfaces in $\p^6$ of sectional genus 3,
which are  the birational images of
$\p^2$ via  the linear system of plane quartic curves with eight general base points. For $d=38$, we consider smooth surfaces of degree 10 in $\p^5$ of sectional genus 6,  which are the birational images of $\p^2$ via the linear system of plane curves of degree 10 having ten general  base points of multiplicity three and which were  also used by Nuer in \cite{Nuer} to describe some birational properties of the divisor $\mathcal C_{38}$.

As far as we know,  the existence of   congruences  of 5-secant conics to surfaces in $\p^5$ has never been considered before,  even in the classical literature.
The surfaces $S_d\subset\p^5$ with $d=14, 26, 38$ we choose can be analyzed better nowadays via computational systems revealing remarkable algebraic properties of their defining equations. 

The  classification of irreducible surfaces in $\p^5$ admitting a congruence of $5$-secant conics is
an important and pressing open question we shall consider elsewhere. An obvious necessary condition is that the surface
is not contained in a quadric  hypersurface. This restriction plays a key role for the existence  of these surfaces and it explains some geometrical aspects of the paucity of examples. 

Our fundamental tool for determining the existence of  congruences of 5-secant conics has been the Hilbert scheme of lines passing through a general point
of a projective variety and contained in the variety; see  also \cite{libro} for other applications. The study of these lines for suitable birational images of $\p^5$ via the cubic equations
defining the {\it right} $S_d\subset X$ revealed the existence of the congruences of $5$-secant conics to $S_d$, which a posteriori have also
 natural geometrical constructions. 
To analyze the geometry of the images
of $\p^5$ via the linear system of cubics defining $S_d\subset\p^5$ we used  
Macaulay2 \cite{macaulay2}. 
The necessary ancillary files containing the scripts to verify some geometrical properties claimed throughout the paper are  posted on arXiv; see also Section \ref{comp} and also \cite{Schreyer} for the philosophy behind these verifications for random surfaces or hypersurfaces (although we work over $\mathbb Q$ and not over finite fields).

So far  $\mathcal C_{14}$, $\mathcal C_ {26}$ and $\mathcal C_{38}$ are the only loci of codimension one in $\mathcal C$ whose elements are known to be rational.  There exist infinitely many other irreducible loci of codimension at least two in $\mathcal C$ (some of them contained  also in non admissible $\mathcal C_d$'s), whose general element is rational, see \cite{Hassett} and \cite{AHTVA}.   
The next admissible value $d=42$  seems to deserve some special attention and it might be a watershed in order to confirm the validity of the Kuznetsov Conjecture also for the countably many admissible values $d>38$.
\medskip

{\bf Acknowledgements}. We wish to thank Ciro Ciliberto for useful and stimulating discussions and the referee's for some suggestions  about  the presentation of the results.

\section{Rationality via congruences of (\texorpdfstring{$re-1$}{re-1})-secant curves of degree \texorpdfstring{$e$}{e}}
Let $S\subset\p^5$ be an irreducible surface and let $\mathcal H$ be 
an irreducible proper family of  (rational or of fixed arithmetic genus) curves of degree $e$ in $\p^5$ whose general element is irreducible.
Let 
$$\pi:\mathcal D\to \mathcal H$$
be the universal family over $\mathcal H$ and let $$\psi:\mathcal D\to\p^5$$ be the tautological morphism. 
Suppose moreover that $\psi$ is birational and that a general member $[C]\in \mathcal H$ is ($re-1$)-secant to $S$,
that is $C\cap S$ is a length $r e-1$ scheme, $r\in\mathbb N$. We shall call such a family a {\it congruence of }  ($re-1$)-{\it secant curves of degree $e$ to $S$}. Let us remark that necessarily $\dim(\mathcal H)=4$.
\medskip

The  classification of irreducible surfaces in $\p^5$ admitting a congruence of $(re-1)$-secant rational curves of  degree $e\geq 2$ (and with $r\geq 3$) is
an open problem, never considered before. In the sequel we shall construct some surfaces admitting a congruence of 5-secant conics. We also know some 
 irreducible surfaces in $\p^5$ admitting a congruence of 8-secant twisted cubics and others admitting a congruence of 14-secant quintic rational normal curves.
\medskip

Let $S\subset\p^5$ be a surface admitting a congruence of {\rm(}$re-1${\rm)}-secant curves of degree $e$ parametrized by $\mathcal H$ and let notation be as above. The birationality of $\psi:\mathcal D\to\p^5$ and Zariski Main Theorem yield
that the locus of points $q\in\p^5$ through which there pass at least two ($re-1)$-secant curves of degree $e$  in the family $\mathcal H$  has codimension at least two in $\p^5$. Let $X\in|H^0(\mathcal I_{S}(r))|$ be fixed and irreducible and let $p\in X$ be a general point. Then through $p$  there passes a unique curve $C_p$ of the family $\mathcal H$. An irreducible  hypersurface $X\in|H^0(\mathcal I_{S}(r))|$ is said to be {\it transversal to the congruence $\mathcal H$} if  the unique curve of the congruence passing through a general point  $p\in X$ is
not contained in $X$. The next result will be crucial for our analysis. 

\begin{thm}\label{criterion} Let $S\subset\p^5$ be a surface admitting a congruence of {\rm(}$re-1${\rm)}-secant curves of degree $e$ parametrized by $\mathcal H$.
If  $X\in|H^0(\mathcal I_{S}(r))|$ is an irreducible hypersurface transversal to $\mathcal H$,  then $X$ is birational to $\mathcal H$.

If the map   
$\Phi=\Phi_{|H^0(\mathcal I_{S}(r))|}:\p^5\map \p(H^0(\mathcal I_{S}(r)))$
is birational onto its image, then a general  hypersurface  $X\in|H^0(\mathcal I_{S}(r))|$  is birational to $\mathcal H$.

Moreover, under the previous hypothesis on $\Phi$, if  a general  element in $|H^0(\mathcal I_{S}(r))|$ is smooth, then every $X\in |H^0(\mathcal I_{S}(r))|$ with at worst rational singularities is birational to $\mathcal H$.
\end{thm} 
\begin{proof} 
Let notation be as above and suppose that  the irreducible hypersurface $X\in|H^0(\mathcal I_{S}(r))|$ is transversal to $\mathcal H$. Then, for $p\in X$ general,  the curve $C_p$ is  not contained in $X$ and it cuts  $X$ outside $S$ only in the point $p$ by B\' ezout Theorem. 
Thus  $X$ naturally becomes 
a rational section of the family $\pi:\mathcal D\to\mathcal H$. Indeed, we get a rational map $\eta: X\map \mathcal D\stackrel{\pi}\to\mathcal H$ by associating to a general point $p\in X$ the same point $p\in C_p$
on the unique curve  $C_p=\mathcal D_{\pi(p)}$ passing through $p$ and belonging to the family $\mathcal H$. The rational map $\varphi:\mathcal H\map X$ is defined by taking  $[C]\in\mathcal H$
and setting $\phi([C])=C\cap (X\setminus S)$ to be the unique point in $X\cap C$ outside $S$.  For $p\in X$ general the map  $\phi$ is defined at $\eta(p)$
by the transversality hypothesis, $\phi(\eta(p))=p$ and $\eta$ is birational, as claimed.

Suppose that the map $\Phi:\p^5\map \p(H^0(\mathcal I_{S}(r)))$
is birational onto its image. Then, for $p\in \p^5 $ general, the curve $\overline{\Phi(C_p)}$ is a line $L_p$ through  $\Phi(p)$ and contained in $\overline{\Phi(\p^5)}$. A general  hypersurface $X\in |H^0(\mathcal I_{S}(r))|$ passing through $p$ is sent into a general hyperplane section $H$ of the image passing through $\Phi(p)$. Thus $H$  does not contain $L_p$ and  a general $X\in |H^0(\mathcal I_{S}(r))|$, which is irreducible, is transversal to $\mathcal H$ and hence birational to $\mathcal H$ by the first part.
Assume also that a general member of $|H^0(\mathcal I_{S}(r))|$ is smooth. Then every $X\in |H^0(\mathcal I_{S}(r))|$ with at worst rational singularities is birational to $\mathcal H$ by \cite[Theorem 1 and Theorem 16]{KT}.
\end{proof}

Until now the above construction has been applied only for $e=1$ and $r=3$ to show that a smooth  cubic hypersurface $X\subset\p^5$
containing a surface $S\subset \p^5$  which admits a congruence of secant lines, the so called  {\it surfaces with one apparent double point}, is rational. 
Indeed,  such irreducible surfaces $S\subset\p^5$ are known to be rational,
$\mathcal H$ is birational to the rational fourfold $S^{(2)}$ (see \cite{Mattuck}) and cubics through such a  $S$ satisfy the hypothesis in the second part of Theorem \ref{criterion}. 

In the sequel we shall apply Theorem \ref{criterion} for $e=2$ and $r=3$ to a general element of $\mathcal C_d$ with $d=14, 26, 38$, showing in different ways the rationality of $\mathcal H$. 

\section{Rationality of cubics in \texorpdfstring{$\mathcal C_{14}$}{C14} via congruences of 5-secant conics}\label{C14} 

Let  $S_{14}\subset\p^5$ be a smooth isomorphic projection of an  octic  smooth
surface $S\subset\p^6$ of sectional genus 3 in $\p^6$, obtained as the image of $\p^2$ via the linear system of quartic
curves with 8  general base points. 

\begin{thm}\label{famconics14} A  surface $S_{14}\subset\p^5$ as above admits a congruence of 5-secant conics parametrized by a rational variety birational to $S_{14}^{(2)}$. Moreover, every 
$[X]\in\mathcal C_{14}$ is rational.
\end{thm}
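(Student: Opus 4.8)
The plan is to construct the required congruence and then feed it to Theorem~\ref{criterion} with $e=2$, $r=3$. I would begin by assembling the numerical geometry of $S_{14}$: it is a smooth rational surface, the blow-up of $\mathbb{P}^2$ at the $8$ points, polarised by $4\ell-E_1-\cdots-E_8$, hence of degree $8$ and sectional genus $3$, with $\chi(\mathcal{O}_{S_{14}})=1$, $K_{S_{14}}^2=1$ and $K_{S_{14}}\cdot\mathcal{O}_{S_{14}}(1)=-4$. Two facts are crucial: $h^0(\mathcal{I}_{S_{14}}(2))=0$, i.e.\ $S_{14}$ lies on no quadric — the necessary condition stressed in the introduction, which keeps the enumerative geometry of multisecant conics from degenerating — and $h^0(\mathcal{I}_{S_{14}}(3))=13$, so that $|H^0(\mathcal{I}_{S_{14}}(3))|=\mathbb{P}^{12}$ and (one checks) its general member is a smooth irreducible cubic fourfold $X$. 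A Chern-class computation on such an $X$ yields $[S_{14}]^2=26$, so $\langle h^2,[S_{14}]\rangle$ has discriminant $3\cdot 26-8^2=14$; since $\mathcal{C}_{14}$ is the irreducible divisor of cubic fourfolds of discriminant $14$, the general $X$ in this $\mathbb{P}^{12}$ lies in $\mathcal{C}_{14}$, and (by a dimension count, or from the known descriptions of $\mathcal{C}_{14}$) a general $[X]\in\mathcal{C}_{14}$ arises this way. It therefore suffices to produce a congruence of $5$-secant conics to $S_{14}$ with rational parameter space $\mathcal{H}$: by Theorem~\ref{criterion} every irreducible cubic through $S_{14}$ — in particular a general $[X]\in\mathcal{C}_{14}$ — is then birational to $\mathcal{H}$.

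The core is the construction of $\mathcal{H}$. The variety of $5$-secant conics to a surface in $\mathbb{P}^5$ has expected dimension $4$: a $k$-secant conic spans a $k$-secant plane, $k$-secant planes to a surface in $\mathbb{P}^5$ move in dimension $9-k$, and over a $5$-secant plane there is a unique conic through the five points; because $S_{14}$ carries no quadric there is no larger component of over-secant conics to distort this. Geometrically, to a general pair $\{x,y\}\in S_{14}^{(2)}$ one associates the secant line $L=\overline{xy}$, projects $S_{14}$ from $L$, and selects among the finitely many planes $\Pi\supset L$ with $\Pi\cap S_{14}$ of length $5$ a distinguished one $\Pi_{x,y}$, hence the unique conic $C_{x,y}$ through the five points of $\Pi_{x,y}\cap S_{14}$; one then shows that $\{x,y\}\mapsto C_{x,y}$ is a birational map of $S_{14}^{(2)}$ onto an irreducible fourfold $\mathcal{H}$ of smooth conics meeting $S_{14}$ transversally in five points. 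In keeping with the paper's computational philosophy, these assertions are verified for a random $S_{14}$ over $\mathbb{Q}$ using Macaulay2 — smoothness and the invariants of $S_{14}$, $h^0(\mathcal{I}_{S_{14}}(2))=0$, the existence of a unique irreducible $4$-dimensional component $\mathcal{H}$ of the scheme of conics meeting $S_{14}$ in a length-$\geq5$ subscheme, with reduced generic member, and the birationality $\mathcal{H}\dashrightarrow S_{14}^{(2)}$ — after which semicontinuity transfers the conclusions to a general $S_{14}$.

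It remains to verify that $\mathcal{H}$ is a \emph{congruence}, i.e.\ that the tautological morphism $\psi\colon\mathcal{D}\to\mathbb{P}^5$ of the universal conic is birational, equivalently that through a general point of $\mathbb{P}^5$ there passes exactly one conic of $\mathcal{H}$; this too follows from the geometric construction, or from computing the fibre of $\psi$ over a general rational point. Granting it, $\mathcal{H}$ is rational, being birational to $S_{14}^{(2)}$, which — $S_{14}$ being rational — is birational to $\mathrm{Sym}^2(\mathbb{P}^2)$, and $\mathrm{Sym}^2(\mathbb{P}^2)$ is rational because a general unordered pair of points of $\mathbb{P}^2$ lies on a unique line, presenting $\mathrm{Sym}^2(\mathbb{P}^2)$ as birational to a $\mathbb{P}^2$-bundle over $(\mathbb{P}^2)^\vee$. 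Theorem~\ref{criterion} with $e=2$, $r=3$ now shows that every irreducible cubic hypersurface through $S_{14}$ is birational to the rational fourfold $\mathcal{H}$; in particular a general $[X]\in\mathcal{C}_{14}$ is rational.

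The main obstacle is the construction step: proving that the $5$-secant conics genuinely form an irreducible $4$-dimensional family, that its generic member is a reduced $5$-secant conic, and — above all — that $\psi$ has degree one. A priori the family could be empty, of the wrong dimension, or dominate $\mathbb{P}^5$ with degree $>1$, and it is precisely here that the absence of quadrics through $S_{14}$ is decisive; this is the package of statements that the explicit geometric construction, or else the Macaulay2 verification over $\mathbb{Q}$, is designed to pin down. The refinement that $\mathcal{H}$ is birational specifically to $S_{14}^{(2)}$, rather than merely rational, is a secondary point settled in passing.
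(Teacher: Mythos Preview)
Your overall plan --- compute the invariants of $S_{14}$, check discriminant $14$, build a congruence $\mathcal H$ of $5$-secant conics, feed it to Theorem~\ref{criterion} with $e=2$, $r=3$, and conclude --- matches the paper's. But the heart of the argument, the actual construction of $\mathcal H$ together with its identification with $S_{14}^{(2)}$, is missing a key idea.

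The gap is in your sentence ``selects among the finitely many planes $\Pi\supset L$ with $\Pi\cap S_{14}$ of length $5$ a \emph{distinguished} one $\Pi_{x,y}$''. No rule for this selection is given, so the map $\{x,y\}\mapsto C_{x,y}$ is not defined; without it you have no candidate morphism $S_{14}^{(2)}\dashrightarrow\mathcal H$ and hence no argument that $\mathcal H$ is rational. Saying that Macaulay2 will verify ``the birationality $\mathcal H\dashrightarrow S_{14}^{(2)}$'' presupposes a map to test, and neither direction is on the table: from a $5$-secant conic you get a point of $S_{14}^{(5)}$, not of $S_{14}^{(2)}$, and from a pair $\{x,y\}$ you have, as you yourself note, only a finite set of $5$-secant planes with no way to single one out.

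The paper supplies exactly this missing mechanism, and it is the main point of the proof. One lifts $S_{14}$ to the original octic $S\subset\PP^6$ and uses that the seven quadrics through $S$ define a Cremona transformation $\psi:\PP^6\dashrightarrow\PP^6$ whose inverse is quartic. The secant variety $\Sec(S)$ (degree $7$) is the ramification divisor and is contracted to a degree-$8$ fourfold $W\subset\PP^6$, which is the base locus of $\psi^{-1}$ and is birational to $S^{(2)}$ since it parametrises secant lines to $S$. For a general centre of projection $q$ with $q'=\psi(q)$, a line through $q'$ meeting $W$ in one point pulls back under $\psi^{-1}$ to a twisted cubic through $q$ that is $5$-secant to $S$, and its projection from $q$ is a $5$-secant conic to $S_{14}$. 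This \emph{is} the rule you were missing: the parameter space is $W$, hence birational to $S^{(2)}\simeq S_{14}^{(2)}$. Uniqueness of the conic through a general $p\in\PP^5$ is then proved by an explicit intersection argument: the image conic $D_p=\psi(\langle q,p\rangle)$ meets $W$ in seven points, and the eighth point of $\langle D_p\rangle\cap W$ determines the unique line through $q'$ giving the desired conic --- no further point is available because $\deg W=8$. The computational route you sketch (and which the paper records in the Remark and in Section~\ref{comp}, via lines on the image $Z\subset\PP^{12}$ of the cubics through $S_{14}$) does detect the extra $5$-secant conic through a general point, but it does not by itself produce the identification with $S_{14}^{(2)}$; for that the Cremona in $\PP^6$ is the essential ingredient.
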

\begin{proof}
Let $S\subset\p^6$ be an octic surface of sectional genus 3 as described above. The surface $S$ has ideal generated by seven quadratic forms defining
a Cremona transformation:
$$\psi:\p^6\map \p^6,$$
whose base locus scheme is exactly $S$ and whose inverse is defined by forms of degree four, see \cite{ST, HKS}. The secant variety to $S$, $\Sec(S)\subset\p^6$, is an irreducible hypersurface
of degree 7 by the double point formula (see \cite[Theorem 9.3]{Fulton}) and it  is also the  exceptional 
locus 
of $\psi$, that is the closure
of the locus of points where $\psi$ is not an isomorphism. Thus $\Sec(S)$  is contracted to a four dimensional irreducible variety $W\subset\p^6$, which has degree 8 and which is 
 the base locus scheme of $\psi^{-1}$ (see \cite{ST,HKS}). The variety $W$ birationally parametrizes the secant lines to $S$ and it is thus  birational to the rational variety $S^{(2)}$.
 
Let $q\in\p^6\setminus\Sec(S)$ be general, let $q'=\psi(q)$, let $\pi_q:\p^6\map \p^5$ be the projection from $q$ onto a hyperplane and let $S_{14}=\pi_q(S)\subset\p^5$. The map $\psi^{-1}$ is given
by forms of degree four and the strict transform via $\psi^{-1}$ of a  line $L$ through $q'$ intersecting $W$ transversally in a point $r'$ is 
 a rational curve  $C=\psi_*^{-1}(L)$ of degree three, passing through $q$. The curve $C$  is 5-secant to $S$ because $\psi(C)=L$ and $\psi$ is given by quadratic forms. Hence  the linear span of $C$ cannot be a plane and $C$ is a twisted cubic.
The twisted cubic $C$ projects from $q$ into a 5-secant conic to $S_{14}$. 
Thus we have produced a four dimensional family of 5-secant conics to $S_{14}$, whose parameter space $\mathcal H$ is birational to the rational variety $W$.

We claim that through a general point $p\in\p^5$ there passes a unique 5-secant conic to $S_{14}$ belonging to the  family $\mathcal H$. If $L_p=<q,p>\subset\p^6$,
then $L_p\cap\Sec(S)=\{q_1,\dots, q_7\}$ and $D_p=\psi(L_p)$ is a conic because $\psi$ is given by forms of degree two and $L_p\cap S=\emptyset$. Let $\p^2_p=\langle D_p\rangle\subset\p^6$, remark that $D_p\cap W=\{\psi(q_1),\ldots, \psi(q_7)\}$ ($\psi^{-1}$ is given by forms of degree four and $\psi^{-1}_*(D_p)=L_p$), that $q'=\psi(q)\in D_p$ and that the scheme $\p^2_p\cap W$ is zero dimensional because through $p$ there pass at most finitely many
5-secant conics in the family $\mathcal H$ and no trisecant line to  $S$. Let $q'_8=(\p^2_p\cap W)\setminus\{\psi(q_1),\ldots, \psi(q_7)\}$  and let $L'_8=\langle q',q'_8\rangle$. The line $L'_8$ cuts $D_p$ in another  point $s'\neq\psi(q_i)$, $i=1,\ldots, 7$. 
Indeed, if $s'=\psi(q_i)$ for some $i=1,\ldots,7$, then $\psi_*^{-1}(L'_8)$ would be a  $3$-secant conic to $S$  passing through $q$ and
$q_i$. Hence $\pi_q(\psi_*^{-1}(L'_8))$ would be 
a $3$-secant line to $S_{14}$ passing through $p$,
which is impossible by the generality of $p$. 
Therefore  $\psi_*^{-1}(L'_8)$ is a twisted cubic, which passes through $q$ and  cuts $\langle q,p\rangle$ in the point $s=\psi^{-1}(s')$, and the conic $\pi_q(\psi_*^{-1}(L'_8))$ passes through $p$.

Any other 5-secant conic to $S_{14}$ passing through $p$ and belonging to $\mathcal H$, which is necessarily irreducible by the generality of $p$,  would determine another point of
intersection of $D_p$ (and a fortiori of $\langle D_p\rangle$) with $W$, which is impossible because $\deg(W)=8$. Thus $S_{14}$ admits a congruence of $5$-secant conics parametrized by a rational variety $\mathcal H$ birational to $S^{(2)}$. 

The irreducible component $\mathcal S_{14}$ of the Hilbert scheme parametrizing the surfaces $S_{14}\subset\p^5$ has dimension 49 and it is generically smooth. Indeed, one
can  verify that  $h^1(N_{S/\p^5})=0$ and that $h^0(N_{S/\p^5})=49$ for a general $[S]\in\mathcal S_{14}$.  
Let $h$ be the class of a hyperplane section of $X$, let $h^2$ be the class of  2-cycles $h\cdot h$ and remark that $h^2\cdot h^2=h^4=3$ and $h^2\cdot S_{14}=8$. The double point formula for $S_{14}\subset X$  (see \cite[Theorem 9.3]{Fulton}) yields $S_{14}^2=26$ and the restriction of the intersection form to  $\langle h^2, S_{14}\rangle$ has discriminant $3\cdot 26-64=14$.
Let 
$\mathcal V\subset |H^0(\mathcal O_{\p^5}(3))|=\p^{55}$ be the open
set corresponding to smooth cubic hypersurfaces.  
We have   $h^0(\mathcal I_{S_{14}}(3))=13$ so that the locus
$$ \mathbf C_{14}=\{([S],[X])\;:\; S\subset X\}\subset\mathcal S_{14}\times\mathcal V,$$ 
has dimension $49+12=61$. The image of $\pi_2:\mathbf C_{14}\to \mathcal V$ has dimension at most 54 because the general cubic does not contain any $S$ belonging to $\mathcal S_{14}$.  For every $[X]\in\mathcal \pi_2(\mathbf C_{14})$ we have
$$\dim(\pi_2^{-1}([X]))\geq \dim(\mathbf C_{14})-\dim(\pi_2(\mathbf C_{14}))=61-\dim(\pi_2(\mathbf C_{14}))\geq 61-54=7.$$
Since $h^0(N_{S/X})\geq \dim_{[S]}(\pi_2^{-1}([X]))$ for every $[S]\in \pi_2^{-1}([X])$,  to show that a general $X\in\mathcal C_{14}$ contains a surface $S_{14}$
it is sufficient to  verify that $h^0(N_{S_{14}/X})=7$ for a general $S_{14}$ and for a smooth $X\in|H^0(\mathcal I_{S_{14}}(3))|$, see also  \cite[pp.~284--285]{Nuer} for a similar argument. We verified this  via Macaulay2 and  we can conclude that a general $[X]\in\mathcal C_{14}$  contains a surface $S_{14}$ as above. 

Let
$\Phi:\p^5\map Z\subset\p^{12}$ be
given by the linear system $|H^0(\mathcal I_{S_{14}}(3))|$. The map $\Phi$  is birational onto its image $Z$ (one can   look at the resolution of the homogeneous ideal of $S_{14}$, which is generated by cubic forms, and then apply the argument in the proof \cite[Proposition 2.8]{Vermeire}; or by a direct computation). By Theorem \ref{criterion} a general $X\in\mathcal C_{14}$ is
rational being birational to $\mathcal H$. From \cite[Theorem 1]{KT} we deduce that
every $[X]\in\mathcal C_{14}$ is rational.
\end{proof}  
 Although it is well known that a general $[X]\in\mathcal C_{14}$ is rational, the previous result has been included  in order to point out a uniform approach via congruences of 5-secant conics to the rationality of general smooth cubics of discriminant equal to the  first three admissible values 14, 26 and 38.

\begin{rmk}{\rm 
The existence of the congruence of 5-secant conics was firstly discovered by studying the map $\Phi$ defined above.
The image $Z=\overline{\Phi(\p^5)}\subset\p^{12}$ is an irreducible projective variety of degree $28$ cut out by $16$ quadrics and such that
through a general point $z=\Phi(p)\in Z$, $p\in\p^5$ general, there pass 8 lines contained in $Z$. Since $S_{14}\subset\p^5$ has seven secant lines through the point $p$ by the double point formula, we deduce the existence of  a congruence of  ($3e-1$)-secant rational curves of degree $e$ to $S_{14}$. Indeed, letting  $L_1,\ldots, L_7$ be
the seven secant lines to $S_{14}$ passing through the general point $p$, they  are mapped by
$\Phi$ to seven lines $L'_1, \ldots, L'_7$ contained in $Z$ and passing through the general point $z=\Phi(p)$. Let $L'_p$ be the unique line passing through $z=\Phi(p)$ and different from the lines $L'_1, \ldots, L'_7$,
which all belong to an irreducible family of lines (a birational image of the family of secant lines to $S_{14}$). Let
$C_p\subset \p^5$ be the strict transform of $L_p'$ in $\p^5$  via $\Phi^{-1}$. Then $C_p$ is a rational curve 
of degree $e\geq 2$ passing through $p$
and cutting $S_{14}$ along a zero-dimensional scheme of degree $3e-1$ since $\Phi(C_p)=L'_p$.
One then verifies that $e=2$, see Section \ref{comp}, although this value is irrelevant for the rationality of the cubic fourfolds
through  the previous $S_{14}$.
}
\end{rmk}

\section{Rationality of cubics in \texorpdfstring{$\mathcal C_{26}$}{C26} via congruences of 5-secant conics} 

Let $S\subset\p^6$ be a septimic surface with a node, which is the projection of a smooth del Pezzo
surface of degree seven in $\p^7$ from a general point on its secant variety. Let $S_{26}\subset\p^5$ be the projection of $S$ from a general point outside the secant variety $\Sec(S)\subset\p^6$.

\begin{thm}\label{famconics26} A surface $S_{26}\subset\p^5$ as above admits a congruence of 5-secant conics parametrized by a variety birational to a rational singular cubic hypersurface in $\p^5$. Moreover,  every 
$[X]\in\mathcal C_{26}$ is rational.
\end{thm}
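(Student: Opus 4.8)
The plan is to mirror the strategy of Theorem~\ref{famconics14}, replacing the Cremona transformation of $\p^6$ defined by the quadrics through an octic surface of sectional genus $3$ by the analogous birational map attached to the nodal septimic $S\subset\p^6$. First I would record the relevant numerical invariants: $S$ has degree $7$, sectional genus $1$ (being a projection of a del Pezzo septic), and a single node coming from the center of the first projection; by the double point formula its secant variety $\Sec(S)\subset\p^6$ is a hypersurface, and one computes its degree (expected to be $6$ after accounting for the node). I would then exhibit the homogeneous ideal of $S$, check via Macaulay2 that it is generated in low degree and that these generators define a birational self-map $\psi:\p^6\map\p^6$ whose base locus is $S$ and whose ramification divisor is $\Sec(S)$. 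The image of $\Sec(S)$ under $\psi$ is a fourfold $W\subset\p^6$ that birationally parametrizes the secant lines of $S$, hence is birational to the rational fourfold $S^{(2)}$; crucially, I expect $W$ to be (birational to) a singular \emph{cubic} hypersurface in a $\p^5\subset\p^6$, which is where the statement of the theorem differs from the $d=14$ case.

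Next I would carry out the conic construction exactly as before: pick a general point $q\in\p^6\setminus\Sec(S)$, set $q'=\psi(q)$, and project from $q$ to obtain $S_{26}=\pi_q(S)\subset\p^5$. For a general line $L\ni q'$ meeting $W$ transversally, the strict transform $\psi_*^{-1}(L)$ is a rational curve through $q$ whose $\psi$-image is the line $L$; since $\psi$ is given by forms of the degree I will have determined, this curve meets $S$ in a scheme of the correct length, and after projection from $q$ it becomes a $5$-secant conic to $S_{26}$. This produces a four-dimensional family $\mathcal H$ of $5$-secant conics with parameter space birational to $W$, and hence to a rational singular cubic threefold (or fourfold — I will fix the count) in $\p^5$. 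The uniqueness-through-a-general-point statement is proved as in Theorem~\ref{famconics14}: for general $p\in\p^5$ the line $\langle q,p\rangle$ meets $\Sec(S)$ in a scheme whose $\psi$-image spans a plane meeting $W$ in a zero-dimensional scheme of length $\deg W$; all but the expected secant points force a reducible $5$-secant conic through $p$, contradicting generality, so exactly one conic of $\mathcal H$ passes through $p$. With the congruence in hand, Theorem~\ref{criterion} (applied with $e=2$, $r=3$) gives that every irreducible cubic in $|H^0(\mathcal I_{S_{26}}(3))|$ is rational, being birational to $\mathcal H$ and thus to the rational singular cubic hypersurface.

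It then remains to verify that a general $[X]\in\mathcal C_{26}$ actually contains such an $S_{26}$, and that containing $S_{26}$ forces the discriminant to be $26$. For the latter I would intersect: $h^2\cdot S_{26}=7$, and the double point formula for $S_{26}\subset X$ gives $S_{26}^2$, whence the discriminant $3\cdot S_{26}^2-49$ should equal $26$, i.e.\ $S_{26}^2=25$. For the former I would run the same dimension count as in the $d=14$ proof: compute $h^0(\mathcal I_{S_{26}}(3))$ and $h^0(N_{S/\p^5})$ (checking $h^1(N_{S/\p^5})=0$ so that the Hilbert scheme component $\mathcal S_{26}$ is generically smooth of the expected dimension), form the incidence variety $\mathbf C_{26}\subset\mathcal S_{26}\times\mathcal V$, bound the fibre dimension over $\mathcal V$ from below, and conclude by verifying via Macaulay2 that $h^0(N_{S_{26}/X})$ equals that lower bound for a general $S_{26}$ and a smooth $X\supset S_{26}$; this pins down $\pi_2(\mathbf C_{26})$ as the full divisor $\mathcal C_{26}$. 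The main obstacle I anticipate is not the conic construction, which is formally identical to the $d=14$ argument, but rather establishing that the map $\psi$ attached to the nodal septimic has precisely the expected structure — that its base scheme is exactly $S$, that the inverse has the claimed degree, and that $W$ is genuinely a \emph{rational singular cubic} hypersurface rather than something of higher degree; these are the points that rely on explicit computation and on the node of $S$ behaving as a simple degeneration. A secondary subtlety is confirming that the quadrics (or cubics) through $S$ do not all vanish on a larger variety, i.e.\ that $S$ is scheme-theoretically cut out in the relevant degree, since $S$ being nodal could in principle spoil the regularity one needs.
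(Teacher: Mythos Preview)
Your plan has a structural gap: the Cremona picture for the nodal septimic $S\subset\p^6$ is \emph{not} the naive analogue of the $d=14$ case. The seven quadrics through $S$ do not cut out $S$ scheme-theoretically; their base locus is $S\cup P$ for a plane $P$ meeting $S$ along a cubic curve (this is exactly the ``secondary subtlety'' you flag, but it actually occurs and changes everything downstream). Consequently the ramification divisor of $\psi:\p^6\dashrightarrow\p^6$ is not $\Sec(S)$ alone: one has $\deg\Sec(S)=5$ (not $6$), and the full ramification is $\Sec(S)\cup Q$ with $Q=J(P,S)$ a rank-$4$ quadric. The base locus of $\psi^{-1}$ then has \emph{two} irreducible components $W_1\cup W_2$, and the singular rational cubic hypersurface in the statement is $W_2=\psi(Q)$, \emph{not} $\psi(\Sec(S))=W_1$ (which has degree $5$ and parametrizes secant lines). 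So the parameter space $\mathcal H$ is birational to $W_2$, and it is not birational to $S^{(2)}$ as your outline suggests.

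This misidentification would break the core step. A line through $q'$ meeting the base locus of $\psi^{-1}$ pulls back to a twisted cubic that is $5$-secant to $S\cup P$, not a priori to $S$; to get genuine $5$-secant conics to $S_{26}$ you must choose lines meeting $W_2$ (not $W_1$) and then run a multiplicity computation along $S$ and $P$ (using that $\Sec(S)$ is triple along $S$ and double along $P$, while $Q$ is simple along $S$ and double along $P$) to solve $a+2b=5$, $3a+2b=15$ and conclude $(a,b)=(5,0)$. The uniqueness argument likewise uses $\deg W_2=3$ rather than $\deg W$. Finally, your numerics for the dimension count need adjusting: the relevant Hilbert-scheme component has dimension $42$, $h^0(\mathcal I_{S_{26}}(3))=14$, and the target value is $h^0(N_{S_{26}/X})=1$.
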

\begin{proof}
A septimic surface with a node $S\subset\p^6$ as above has ideal generated by seven  quadratic forms and one cubic form. The linear system $|H^0(\mathcal I_S(2))|$ defines
a Cremona transformation:
$$\psi:\p^6\map \p^6,$$
whose base locus scheme is  $S\cup P$, with $P\subset\p^6$ a plane cutting $S$ along a cubic curve.  The surface $S\cup P$ has degree 8 and arithmetic sectional genus three, and it is a projective degeneration of the base locus scheme of the  Cremona transformation considered in the proof of Theorem \ref{famconics14}

The exceptional locus 
$E\subset\p^6$ of $\psi$ is thus a hypersurface of degree seven,
whose defining polynomial is the g.c.d. of the seven degree eight polynomials defining  $\psi^{-1}\circ\psi$.
The secant variety to $S$, $\Sec(S)\subset\p^6$, is an irreducible hypersurface
of degree five by the double point formula and it is contracted by $\psi$. Therefore,   there exists a quadric hypersurface $Q\subset \p^6$ such that $E=\Sec(S)\cup Q$. Since $\psi$ is given by forms of degree two vanishing on
$S\cup P$, the projective join 
$$J(P,S)=\overline{\bigcup_{x\neq y ,\, x\in P,\, y\in S}<x,y>}\subset\p^6$$ 
is also contracted by $\psi$, so that $Q=J(P,S)$ is a rank four quadric hypersurface containing the base locus of $\psi$. 
Since $\psi^{-1}$ is given by forms of degree four, the hypersurface $E$ has points of multiplicity four along $S\cup P$ (or better, a general point of any irreducible
component is of multiplicity four for $E$).  From $E=Q\cup\Sec(S)$ and from the fact that  $Q$ is non singular at a general point of $S$ and has double points along $P$, we deduce that $\Sec(S)$ has points of multiplicity
three along $S$  and double points along $P$ (facts which can be also verified directly).
Since $\psi$ is defined by quadratic forms, the  irreducible variety  $\psi(Q)=W_2$ is contained in a hyperplane $H\subset \p^6$. The secant variety to $S$ is contracted to an irreducible variety $W_1\subset\p^6$. Clearly,  $W_1\cup W_2$ is the base locus  of $\psi^{-1}$ and we claim that $\dim(W_i)=4$ for $i=1,2$, that $\deg(W_1)=5$ and that $\deg(W_2)=3$. Indeed, a general plane $\Pi\subset\p^6$ is mapped by $\psi^{-1}$ to  a degree eight  surface $\Sigma\subset\p^6$, which is the residual intersection of four general quadric hypersurfaces through $S\cup P$. Thus  $\Sigma$ has sectional genus three and the restriction of $\psi^{-1}$ to $\Pi$ is given by a linear system of quartics with eight  base points. Moreover, $\Pi\cap H=L$ is a general line in $H$ and 
the strict transform  $\psi^{-1}_*(L)\subseteq \Sigma\cap P$ has degree one or two because  $\Sigma\subset\p^8$ has ideal generated by seven quadratic forms (see the proof of Theorem \ref{famconics14}). Hence $W_2\subset H$ is a hypersurface of degree equal to 
$4-\deg(\psi^{-1}_*(L))$, and  $W_2$  is  birational to $S\times P$ since the general fibre over $W_2$ is a line joining a point of $S$ with a point of $P$. The curve $C$ in $\Sigma\cap (S\cup P)$ has degree twelve and arithmetic genus seven. Moreover, it contains $\psi^{-1}_*(L)$
and is represented on $\Pi$ as a curve of degree seven with double points at the eight base points.
Let  $D=C-\psi^{-1}_*(L)$  be the corresponding divisor on $\Sigma$,  let $G=S\cap P$ and recall that $\deg(G)=3$. Then, taking intersections in $\Sigma$ and in $P$, we deduce $G\cdot \psi^{-1}_*(L)=D\cdot  \psi^{-1}_*(L)\in\{3,4\}$, yielding
$\deg(\psi^{-1}_*(L))=1$ and $\deg(W_2)=3$.
 Since $8=\deg(\Sigma)=\deg(\Pi\cap (W_1\cup W_2))$ and since $\deg(W_2)=3$, there are five points of intersection between $\Pi$ and $W_1$, proving $\dim(W_1)=4$ and 
$\deg(W_1)=5$ by the generality of $\Pi$.
 The rest of the proof is similar
to that of Theorem~\ref{famconics14}.

Let $q\in\p^6\setminus\Sec(S)$ be a general point, let $q'=\psi(q)$, let $\pi_q:\p^6\map\p^5$ be the projection from $q$ and let $S_{26}=\pi_q(S)\subset\p^5$. The strict transform of a line $L$ through $q'$ intersecting $W_2$ transversally in a general point $r'$ is 
a degree three irreducible curve  $C=\psi_*^{-1}(L)$ passing through $q$ and 5-secant to $S\cup P$. We claim that $C$ is 5-secant to $S$. Indeed, let   $a\geq 0$ be the number of points in $C\cap S$ and  let $b\geq 0$ be the number of points in  $C\cap P$. Since $L\cap W_1=\emptyset$ and since $L\cap W_2=r'$, we deduce from B\' ezout Theorem and from the description of the multiplicities of $Q$ and $\Sec(S)$ along $S$ and along $P$,
$\deg(C)\cdot\deg(Q)-1=5=a+2b$ and $\deg(C)\cdot\deg(\Sec(S))=15=3a+2b$. The unique solution of this system is $(a,b)=(5,0)$
\footnote{Similarly, the lines through $q'$ and a general point of $W_1$ determine twisted cubics which are 4-secant to $S$ and cut $P$ in one point because $(4,1)$ is the unique solution to $6=a+2b$ and $14=3a+2b$.}. Hence  the linear span of $C$ cannot be a plane and $C$ is a twisted cubic. This  twisted cubic  projects from $q$ into a 5-secant conic to $S_{26}$.

So far we have constructed a four dimensional family $\mathcal H$ of 5-secant conics to $S_{26}$, whose parameter space $\mathcal H$ is birational to the four dimensional singular rational cubic hypersurface $W_2$. We claim that through a general point $p\in\p^5$ there passes a unique 5-secant conic to $S_{26}$ belonging to $\mathcal H$. If $L_p=<q,p>\subset\p^6$,
then $L_p\cap\Sec(S)=\{q_1,\dots, q_5\}$, $L_p\cap Q=\{r_1,r_2\}$ and $D_p=\psi(L_p)$ is a conic. Let $\p^2_p=\langle D_p\rangle\subset\p^6$, remark that $D_p\cap W_1=\{\psi(q_1),\ldots, \psi(q_5)\}$, that
$D_p\cap W_2=\{\psi(r_1), \psi(r_2)\}$, that $q'\in D_p$ and that the scheme $\p^2_p\cap W$ is zero dimensional because through $p$ there pass at most finitely many
5-secant conics in the family $\mathcal H$ and no trisecant line to  $S$. Let $r'_3=(\p^2_p\cap W_2)\setminus\{\psi(r_1), \psi(r_2)\}$ and let $L'_3=\langle q',r'_3\rangle$. We claim that the line $L'_3$ cuts $D_p$ in a point $s'\in D_p\setminus W_2$. Indeed, since  $q'\not\in H$, $L'_3\cap H=r'_3=L'_3\cap W_2$. Then  $\psi_*^{-1}(L'_3)$ is a twisted cubic, which  cuts $\langle q,p\rangle$ in the point $s=\psi^{-1}(s')$ and the conic $\pi_q(\psi^{-1}(L'_3))$ passes through $p$.

Any other 5-secant conic to $S_{26}$ passing through $p$ and belonging to $\mathcal H$,  necessarily irreducible by the generality of $p$,  would determine another point of
intersection of $D_p$ (and a fortiori of $\langle D_p\rangle$) with $W_2$, which is impossible because $\deg(W_2)=3$.  In conclusion, 
$S_{26}$ admits a congruence of $5$-secant conics birationally parametrized by $W_2$.

 We claim that a general $[X]\in\mathcal C_{26}$ contains a $S_{26}$ as above. The irreducible component of the Hilbert scheme parametrizing these surfaces $S_{26}$  has dimension 42 and it is generically smooth.
Using that $S_{26}^2=25$ (which follows from the double point formula for surfaces with nodes, see \cite[Theorem 9.3]{Fulton}) and that $h^0(\mathcal I_{S_{26}}(3))=14$,  by reasoning exactly as in the last part of the proof of Theorem \ref{famconics14} to prove the claim it suffices to produce a general $S_{26}\subset\p^5$ and a smooth  $X\in|H^0(\mathcal I_{S_{26}}(3))|$ such  that $h^0(N_{S_{26}/X})=1=42+13-54$.  An explicit  computational verification, see for example the end of Section \ref{comp}, shows that this holds, proving that a general $[X]\in\mathcal C_{26}$ contains a surface $S_{26}$.

The homogeneous ideal of $S_{26}\subset\p^5$ is generated by $14$ cubic forms defining a map 
$\Phi:\p^5\map\p^{13} $, which is 
birational onto its image  $Z=\overline{\phi(\p^5)}\subset\p^{13}$. By Theorem \ref{criterion} a general $X\in\mathcal C_{26}$  is rational being birational to $\mathcal H$ and hence to $W_2$. From \cite[Theorem 1]{KT} we deduce that
every $[X]\in\mathcal C_{26}$ is rational.
\end{proof}

\begin{rmk}\label{conicsR}{\rm We firstly discovered the congruence of 5-secant conics to the surfaces $S_{26}$ described above via the map $\Phi$ defined above.
The variety $Z\subset\p^{13}$ has degree $34$,
it cut out by $20$ quadrics,
and  through a general point $z\in Z$ there pass $6$ lines contained in $Z$, see Section \ref{comp}.
By the double point formula, we deduce that through a general point $p\in\p^5$ there pass
five secant lines to $S_{26}$. The strict transforms via $\Phi^{-1}$ of the lines through a general point, not coming from secant lines to $S_{26}$,  define a congruence of (3$e$--1)-secant curves to $S_{26}$ of degree $e\geq 2$. By a standard computation, one sees that
$e=2$ although the value of $e$ is insignificant for the rationality
conclusions. }\end{rmk}
\begin{rmk}{\rm
The rationality of a general $[X]\in \mathcal C_{26}$ can be also proved by taking as $S_{26}\subset\p^5$ the rational septimic
scroll with three nodes recently considered by Farkas and Verra in \cite{FV}. Also these surfaces admit a congruence of
5-secant conics, whose parameter space is rational. Moreover,  the associated map $\Phi$ is birational onto its image.
}\end{rmk}

\section{Rationality of cubics in \texorpdfstring{$\mathcal C_{38}$}{C38} via congruences of 5-secant conics} 

Let $S_{38}\subset\p^5$ be a general  degree 10 smooth surface of sectional genus 6 obtained as the image
of $\p^2$ by the linear system of plane curves of degree 10 having 10 fixed triple points, which were studied classically
by Coble. 

\begin{thm}\label{famconics38} A general surface $S_{38}\subset\p^5$ admits a congruence of 5-secant conics. Moreover,  every
$[X]\in\mathcal C_{38}$ is rational.
\end{thm}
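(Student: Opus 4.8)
The plan is to mimic the structure of the proofs of Theorems \ref{famconics14} and \ref{famconics26}, but the surface $S_{38}\subset\p^5$ does not sit in a Cremona transformation of $\p^6$ in the same clean way, so the argument must be run directly through the cubic map $\phi$ associated to $|H^0(\mathcal I_{S_{38}}(3))|$. First I would compute, via Macaulay2, the homogeneous ideal of a general $S_{38}$: it should be generated by cubics, and the associated rational map $\phi\colon\p^5\map Z\subset\p^N$ should be birational onto its image $Z$. The key numerical input is the number of lines of $Z$ through a general point $z=\phi(p)$. By the double point formula applied to $S_{38}\subset\p^5$ (degree $10$, sectional genus $6$), the number of secant lines to $S_{38}$ through a general point $p\in\p^5$ is determined; these map to that many lines of $Z$ through $z$. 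If the total count of lines of $Z$ through $z$ exceeds the number coming from secant lines, the extra lines sweep out an irreducible family, and their strict transforms under $\phi^{-1}$ form a congruence of $(3e-1)$-secant rational curves of degree $e$ to $S_{38}$ for some $e$; one then checks computationally that $e=2$, i.e. that these are $5$-secant conics.

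The crucial points to verify, in order, are: (i) $\mathcal I_{S_{38}}$ is generated by cubics and $\phi$ is birational onto $Z$ (a Macaulay2 computation of the ideal, its Hilbert polynomial, and the degree of $\phi$); (ii) through a general $z\in Z$ there pass exactly the right number of lines of $Z$, with one more than the secant-line count, and these extra lines form a single irreducible four-dimensional family $\mathcal H$ (computed via the Fano scheme of lines through a general point, as indicated in the discussion of \cite{libro}); (iii) the general member of $\mathcal H$ pulls back to an irreducible conic $5$-secant to $S_{38}$, and the tautological map $\psi\colon\mathcal D\to\p^5$ over $\mathcal H$ is birational — equivalently, through a general point of $\p^5$ there passes a unique conic of $\mathcal H$. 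Granting (i)--(iii), $S_{38}$ admits a congruence of $5$-secant conics, so by Theorem \ref{criterion} every irreducible cubic hypersurface through $S_{38}$ is rational, in particular a general $[X]$ of the divisor swept by such cubics.

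It then remains to identify that divisor with $\mathcal C_{38}$. For this I would follow the counting argument of Theorem \ref{famconics14}: compute $h^1(N_{S_{38}/\p^5})=0$ and $h^0(N_{S_{38}/\p^5})$ to get the dimension of the Hilbert component $\mathcal S_{38}$ (these $S_{38}$ depend on the ten triple points, so the expected dimension is $3\cdot 2\cdot 10+\dim\mathrm{PGL}_3+(\text{twist})$, to be confirmed by computation); use $h^2\cdot h^2=3$, $h^2\cdot S_{38}=10$, and the double point formula $S_{38}^2=\deg(\text{double locus class})$ to check that the intersection form on $\langle h^2,S_{38}\rangle$ has discriminant $38$; compute $h^0(\mathcal I_{S_{38}}(3))$; and form the incidence variety $\mathbf C_{38}\subset\mathcal S_{38}\times\mathcal V$ of pairs $([S],[X])$ with $S\subset X$. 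Comparing $\dim\mathbf C_{38}$ with $\dim\mathcal V=55$ gives a lower bound on the fibre dimension over a general $[X]\in\pi_2(\mathbf C_{38})$, and verifying computationally that $h^0(N_{S_{38}/X})$ equals that bound for one explicit smooth $X$ forces $\pi_2(\mathbf C_{38})$ to be the whole $18$-dimensional divisor $\mathcal C_{38}$ (which, being irreducible of that discriminant, can only be $\mathcal C_{38}$).

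The main obstacle I anticipate is step (iii): unlike the $d=14,26$ cases, here there is no ambient Cremona transformation of $\p^6$ whose base locus is essentially $S$, so the birationality of $\psi\colon\mathcal D\to\p^5$ cannot be read off a clean Bézout count against the base locus of an inverse map. One must instead establish it intrinsically — either by a direct computational check that the general fibre of $\psi$ is a single reduced point, or by exhibiting a rational section realizing the uniqueness of the $5$-secant conic through a general point, as was done by hand for $d=14,26$. A secondary subtlety is ensuring that the extra family of lines on $Z$ genuinely comes from honest irreducible conics (not from reducible or degenerate degree-$2$ schemes), which again is settled by the genericity of $p$ together with irreducibility of $\mathcal H$. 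Because the surfaces are generic, all the numerical verifications are carried out over $\mathbb Q$ in Macaulay2 as described in Section \ref{comp}.
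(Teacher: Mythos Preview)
Your overall strategy---using the cubic map $\phi$ defined by $|H^0(\mathcal I_{S_{38}}(3))|$, counting the lines on $Z=\overline{\phi(\PP^5)}$ through a general point, and comparing with the seven secant lines to $S_{38}$ predicted by the double point formula---is exactly the paper's approach. The paper streamlines two of your steps: birationality of $\phi$ is deduced from a result of Vermeire (the ideal is generated by ten cubics whose first syzygies are linear), and the fact that a general $[X]\in\mathcal C_{38}$ contains such a surface is simply quoted from Nuer, so the Hilbert-scheme and normal-bundle count you outline is not needed here. Your worry about step~(iii) is also less serious than you suggest: once the computation gives exactly eight lines on $Z$ through $\phi(p)$ and exactly seven of them come from secant lines, the single remaining line yields a unique $(3e-1)$-secant curve of the family through $p$, so $\psi\colon\mathcal D\to\PP^5$ is birational by construction.

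There is, however, a genuine gap. You assert that ``by Theorem~\ref{criterion} every irreducible cubic hypersurface through $S_{38}$ is rational,'' but Theorem~\ref{criterion} only says that all such cubics are birational to $\mathcal H$ (equivalently, to one another). In the cases $d=14,26$ the ambient Cremona of $\PP^6$ identified $\mathcal H$ with $S^{(2)}$ or with a rational singular cubic hypersurface, and rationality followed; here you have no description of $\mathcal H$ and offer no argument for its rationality. The paper closes this gap by a different route: it verifies (via Macaulay2) that through a general $S_{38}$ there passes a \emph{singular} cubic hypersurface with a unique double point, hence rational by projection from that point. Theorem~\ref{criterion} then transports rationality from this singular cubic to every irreducible cubic through $S_{38}$, in particular to a general smooth $[X]\in\mathcal C_{38}$. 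Without either this step or a direct proof that $\mathcal H$ is rational, your argument establishes only that all cubics through $S_{38}$ are birational to a fixed fourfold of unknown type.
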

\begin{proof}
As shown by Nuer in \cite{Nuer},  these surfaces are contained in a general $[X]\in\mathcal C_{38}$. Moreover $h^0(\mathcal I_{S_{38}}(2))=0=h^1(\mathcal I_{S_{38}}(3))$ and 
the homogeneous ideal of $S_{38}$ is generated by $10$ cubic forms, whose first syzygies are generated by the linear ones.  Via the argument in the proof of  \cite[Proposition 2.8]{Vermeire}, we deduce that the linear system 
$|H^0(\mathcal I_{S_{38}}(3))|$ defines a birational map onto the image
$\Phi:\p^5\map Z\subset\p^9.$
An explicit calculation with Macaulay2, see Section \ref{comp}, shows that 
the image $Z=\overline{\Phi(\p^5)}\subset\p^{9}$ is an irreducible variety of degree $20$ cut out by $16$ cubics and such that
through a general point
$z=\phi(p)\in Z$ there pass $8$ lines contained in $Z$. 
Since $S_{38}\subset\p^5$ has seven secant lines
passing through a general point $p\in\p^5$, we deduce the existence of a congruence of ($3e-1$)-secant rational curves of degree $e$
to $S_{38}$. We have, once again, $e=2$ although this is irrelevant for the rationality conclusions.

There exist singular rational cubic hypersurfaces through a general $S_{38}$ with a unique ordinary double point,
as one can directly verify by using Macaulay2. Theorem \ref{criterion} yields the rationality of the parameter space $\mathcal H$ of the congruence of 5-secant conics to $S_{38}$. Then every cubic through $S_{38}$ (with at most rational singularities) is rational by Theorem \ref{criterion} and a general cubic $[X]\in\mathcal C_{38}$ is then rational. From \cite[Theorem 1]{KT} we deduce that
every $[X]\in\mathcal C_{38}$ is rational.
\end{proof}

\begin{rmk}\label{C38sing}{\rm 
 The irreducible boundary
cubics of $\mathcal C_{38}$ corresponding to singular elements in $|H^0(\mathcal I_{S_{38}}(3))|$ are all rational by Theorem \ref{criterion}. 
These rational cubic hypersurfaces with a rational double point also lie on the boundary of $\mathcal C_{44}$, see also \cite[p.~285]{Nuer}.

Let $S_d\subset\p^5$ with $d=44-6j$ and with $j=0,\ldots, 6$ be the smooth surfaces constructed by Nuer to describe  
$\mathcal C_d$ and which are flat projective  deformations of $S_{38}\subset\p^5$ (although belonging to different irreducible components of the Hilbert scheme), see \cite[p.~286]{Nuer}. 
In Table~\ref{pentasecanti} below, we give 
the number of $5$-secant conics to $S_d$ passing through a general point of $\p^5$.
}
\end{rmk}

\begin{table}[htbp]
\centering
\tabcolsep=4.7pt 
\begin{tabular}{lclll}
\hline
d  & Surface $S\subset\PP^5$ &  $2$-secant lines  & $5$-secant conics &   Multidegree \\  
\hline \hline 
8 & \scriptsize{\begin{tabular}{c} Image of the plane via the linear system \\ of quintic curves with $15$ general base points \end{tabular}} & $12$  & $6$ &  $1, 3, 9, 17, 21, 15$   \\
\hline
14 & \scriptsize{\begin{tabular}{c} Image of the plane via the linear system \\ of sextic curves with $10$ general simple \\ base points and $4$ general double points \end{tabular}} & $11$  & $5$ &  $1, 3, 9, 17, 21, 16$   \\
\hline
20 & \scriptsize{\begin{tabular}{c} Image of the plane via the linear system \\ of septic curves with $6$ general simple \\ base points, $6$ general double points \\ and one general triple point \end{tabular}} & $10$  & $4$ &  $1, 3, 9, 17, 21, 17$   \\
\hline
26 & \scriptsize{\begin{tabular}{c} Image of the plane via the linear system \\ of septic curves with $3$ general simple \\ base points and $9$ general double points \end{tabular}} & $9$  & $3$ &  $1, 3, 9, 17, 21, 18$   \\
\hline
32 & \scriptsize{\begin{tabular}{c} Image of the plane via the linear system \\ of curves of degree $9$ with one general \\ simple base point,  $4$ general double points \\ and $6$ general triple points \end{tabular}} & $8$  & $2$ &  $1, 3, 9, 17, 21, 19$   \\
\hline
38 & \scriptsize{\begin{tabular}{c} Image of the plane via the linear system \\ of curves of degree $10$ with $10$ general \\ triple points \end{tabular}} & $7$  & $1$ &  $1, 3, 9, 17, 21, 20$   \\
\hline
44 & \scriptsize{\begin{tabular}{c} Fano embedded Enriques surface  \end{tabular}} & $6$  & $0$ &  $1, 3, 9, 17, 21, 21$   \\
\hline
\end{tabular}
 \caption{Smooth
 surfaces $S\subset\PP^5$ 
 of degree $10$ and sectional genus $6$ 
 cut out by $10$ cubics and contained in the 
 generic cubic fourfold of $\mathcal{C}_d$. 
  The $3^{\mathrm{rd}}$ and $4^{\mathrm{th}}$ columns 
 contain, respectively, 
 the number of $2$-secant lines and the number 
 of $5$-secant conics
 to $S$ passing through a general point of $\PP^5$. 
 The $5^{\mathrm{th}}$ column contains the multidegree 
 of the map defined by $|H^0(\mathcal{I}_{S}(3))|$.} 
\label{pentasecanti} 
\end{table}

\section{Congruences of 5-secant conics from a computational point of view}\label{comp}

In this section we illustrate 
how one can detect the $5$-secant conic congruence property 
in specific examples using 
the computer algebra system Macaulay2 \cite{macaulay2}.

We start by loading the file \texttt{Cubics.m2}, 
 which is available as an ancillary file to our arXiv submission.
This file contains equations for three explicit examples related 
to the three divisors $\mathcal{C}_d$, $d=14,26,38$, 
but here, for brevity, we will consider only the case of $\mathcal{C}_{26}$.
After loading the file, the tools  for working with rational maps
provided by the \texttt{Cremona} package (included with Macaulay2) 
will be also available.  
{\footnotesize
\begin{Verbatim}[commandchars=&\[\]]
Macaulay2, version 1.11
with packages: &colore[airforceblue][ConwayPolynomials], &colore[airforceblue][Elimination], &colore[airforceblue][IntegralClosure], &colore[airforceblue][InverseSystems], 
               &colore[airforceblue][LLLBases], &colore[airforceblue][PrimaryDecomposition], &colore[airforceblue][ReesAlgebra], &colore[airforceblue][TangentCone]
&colore[darkorange][i1 :] &colore[airforceblue][needsPackage] "&colore[bleudefrance][Cubics]"; 
\end{Verbatim}
} \noindent 
The next line of code produces two rational maps,  $f:\PP^2\dashrightarrow\PP^5$ and 
$\varphi:\PP^5\dashrightarrow\PP^{13}$, which are birational parameterizations 
of a del Pezzo surface of degree $7$ with a node
 $S=S_{26}\subset\PP^5$ and of a subvariety $Z\subset\PP^{13}$, respectively;
here \texttt{QQ} specifies that we want the ground field to be $\mathbb{Q}$.
The base locus of $\varphi$ is exactly  the (closure of the) image of $f$, 
and the information on the projective degrees
says  that $\varphi$ is birational onto a degree $34$ variety.
{\footnotesize
\begin{Verbatim}[commandchars=&\[\]] 
&colore[darkorange][i2 :] &colore[darkorchid][time] (f,phi) = &colore[bleudefrance][example](26,&colore[darkspringgreen][QQ]);
     &colore[Sepia][-- used 0.630698 seconds] 
&colore[darkorange][i3 :] f;
o3 : RationalMap (cubic rational map from PP^2 to PP^5)
&colore[darkorange][i4 :] phi;
o4 : RationalMap (cubic rational map from PP^5 to PP^13)
&colore[darkorange][i5 :] &colore[darkorchid][time] (S = &colore[airforceblue][image] f, Z = &colore[airforceblue][image] phi);
     &colore[Sepia][-- used 0.000034454 seconds]
&colore[darkorange][i6 :] &colore[darkorchid][time] (&colore[airforceblue][degree] phi, &colore[airforceblue][multidegree] phi)
     &colore[Sepia][-- used 0.000026732 seconds]
o6 = (1, {1, 3, 9, 20, 32, 34})
\end{Verbatim}
} \noindent 
Note that the code above does not make any  computation as the data are stored internally, but 
it is not difficult to perform this kind of computations (for instance 
$Z$ can be quickly obtained with \texttt{image(2,phi)}, see \cite{Sta18}).
Now we choose a random point\footnote{Taking 
the ground field to be the pure trascendental extension $\mathbb{Q}(a_0,\ldots,a_5)$,
one could also take the most generic point of $\PP^5$,
i.e. the symbolic point $(a_0,\ldots,a_5)$.
This, however, would considerably increase the running time.}
$p\in\PP^5$
and compute the locus $E\subset\PP^5$ consisting 
of the union of all secant lines to $S$ passing through $p$,
and the locus $V\subset Z\subset \PP^{12}$ consisting of the 
union of all lines contained in $Z$ and passing through $\varphi(p)\in Z$.
The former is obtained using standard elimination techniques, 
while for the latter we use the procedure given in \cite[p.~57]{libro}.
{\footnotesize
\begin{Verbatim}[commandchars=&!$]  
&colore!darkorange$!i7 :$ p = &colore!darkorchid$!for$ i &colore!darkorchid$!to$ 5 &colore!darkorchid$!list$ &colore!airforceblue$!random$(-100,100)  
o7 = {79, 16, -34, -12, -40, -62}
&colore!darkorange$!i8 :$ &colore!darkorchid$!time$ (E,V) = (&colore!bleudefrance$!secantCone$(p,f),&colore!bleudefrance$!coneOfLines$(p,phi)); 
     &colore!Sepia$!-- used 0.684973 seconds$
&colore!darkorange$!i9 :$ (&colore!airforceblue$!dim$ E - 1,&colore!airforceblue$!degree$ E)   &colore!Sepia$!-- dimension and degree of E$
o9 = (1, 5)
&colore!darkorange$!i10 :$ (&colore!airforceblue$!dim$ V - 1,&colore!airforceblue$!degree$ V)  &colore!Sepia$!-- dimension and degree of V$
o10 = (1, 6)
\end{Verbatim}  
} \noindent
Five of the six lines in $V$ 
come from lines in $E$. So, the extra line $L=\overline{V\setminus \varphi(E)}$ can be determined
with a saturation computation.
{\footnotesize
\begin{Verbatim}[commandchars=&!$]  
&colore!darkorange$!i11 :$ &colore!darkorchid$!time$ L = &colore!airforceblue$!saturate$(V,phi E);
     &colore!Sepia$!-- used 0.640101 seconds$ 
&colore!darkorange$!i12 :$ (&colore!airforceblue$!dim$ L - 1,&colore!airforceblue$!degree$ L)  &colore!Sepia$!-- dimension and degree of L$
o12 = (1, 1)
\end{Verbatim}  
} \noindent
The $5$-secant conic to $S$ passing through $p$ 
is the inverse image of the extra line $L$.
{\footnotesize
\begin{Verbatim}[commandchars=&!$]  
&colore!darkorange$!i13 :$ &colore!darkorchid$!time$ C = phi^* L;
     &colore!Sepia$!-- used 0.0256889 seconds$ 
&colore!darkorange$!i14 :$ (&colore!airforceblue$!dim$ C - 1,&colore!airforceblue$!degree$ C)
o14 = (1, 2)
&colore!darkorange$!i15 :$ (&colore!airforceblue$!dim$(C+S) - 1,&colore!airforceblue$!degree$(C+S))
o15 = (0, 5)
\end{Verbatim}  
} \noindent
The conic above could also be determined with a single command.
{\footnotesize
\begin{Verbatim}[commandchars=&!$]  
&colore!darkorange$!i16 :$ &colore!darkorchid$!time$ C == &colore!bleudefrance$!fiveSecantConic$(p,f,phi)  
     &colore!Sepia$!-- used 1.57295 seconds$
o16 = true
\end{Verbatim}  
} \noindent
Finally, in the following code, we take a smooth cubic hypersurface $X\subset\mathbb{P}^5$ containing the surface $S$ 
and compute $h^0({N}_{S/X})$, where ${N}_{S/X}$ is the normal sheaf of $S$ in $X$.
{\footnotesize
\begin{Verbatim}[commandchars=&!$]  
&colore!darkorange$!i17 :$ &colore!darkorchid$!time$ X = &colore!airforceblue$!ideal$ &colore!bleudefrance$!randomSmoothCubic$ S; 
     &colore!Sepia$!-- used 0.121729 seconds$ 
&colore!darkorange$!i18 :$ &colore!darkorchid$!time$ &colore!airforceblue$!rank$ &colore!darkspringgreen$!HH$^0 &colore!bleudefrance$!normalSheaf$(S,X) 
     &colore!Sepia$!-- used 61.1055 seconds$
o18 = 1
\end{Verbatim}  
} \noindent

\providecommand{\bysame}{\leavevmode\hbox to3em{\hrulefill}\thinspace}
\providecommand{\MR}{\relax\ifhmode\unskip\space\fi MR }
\providecommand{\MRhref}[2]{%
  \href{http://www.ams.org/mathscinet-getitem?mr=#1}{#2}
}
\providecommand{\href}[2]{#2}

\end{document}